\documentclass{amsart}

\usepackage{latexsym}
\usepackage{amsmath}
\usepackage{amssymb}

\usepackage{multicol}

\usepackage[english]{babel}

\theoremstyle{plain}

\newtheorem{theorem}{Theorem}[section]
\newtheorem{proposition}[theorem]{Proposition}
\newtheorem{lemma}[theorem]{Lemma}
\newtheorem{cor}[theorem]{Corollary}

\theoremstyle{definition}

\newtheorem{rem}[theorem]{Remark}

\newcommand{\Dbb}{\mathbb D}
\newcommand{\Tbb}{\mathbb T}

\newcommand{\spd}{\mathbb{S}^{d-1}}

\newcommand{\rd}{{\mathbb{R}}^d}

\newcommand{\bd}{\mathbb{B}^d}

\newcommand{\Aaa}{E}

\numberwithin{equation}{section}

\begin{document}

\date{}

\author{Evgueni Doubtsov}
\address{St.~Petersburg Department
of Steklov Mathematical Institute, Fo\-ntanka 27, St.~Petersburg 191023, Russia}
\email{dubtsov@pdmi.ras.ru}

\author{Ioann Vasilyev}
\address{St.~Petersburg Department
of Steklov Mathematical Institute, Fo\-ntanka 27, St.~Petersburg 191023, Russia}
\email{ivasilyev@pdmi.ras.ru}

\author{Grigory Voinov}
\address{Department of Mathematics and Computer Science,
St.~Petersburg State University,
Line 14th (Vasilyevsky Island), 29, St.~Petersburg 199178, Russia}
\email{grihnuhmih@gmail.com}

\title[Coulomb fields on the sphere]{
Sharp weighted $L^1$ bounds for Coulomb fields on the sphere}

\begin{abstract}
Let $d \ge 2$, let $\alpha_1,\dots, \alpha_n > 0$, and put $A = \sum_k \alpha_k$. We
determine, up to constants depending only on $d$, the smallest possible
$L^1$ norm in the unit ball $\mathbb{B}^d\subset \mathbb{R}^d$ of the Coulomb field generated by charges of
strengths $\alpha_k$ placed on the unit sphere $\mathbb{S}^{d-1}$. More precisely, we prove
\[
\inf_{x_1,\dots,x_n \in \mathbb{S}^{d-1}}
\int_{\mathbb{B}^d}
\left|
\sum_{k=1}^n
\alpha_k
\frac{x_k-x}{|x_k - x|^d}
\right|
dx \asymp_d A^{-1/(d-1)}
\sum_{k=1}^n
\alpha_k^{d/(d-1)}.
\]
The lower bound follows from an  $L^1$ Lipschitz trace estimate and an
explicit weighted tent function on the sphere. For the upper bound
we use a partition of the sphere into cells of masses $\alpha_k/A$ and
diameters $O_d \left((\alpha_k/A)^{1/(d-1)} \right)$.
For the unit weights the sharp order
is $n^{(d-2)/(d-1)}$, and in dimension three it is
$\sqrt{n}$.
\end{abstract}

\keywords{Coulomb field, Newtonian potential, Lipschitz trace estimate,
Chui conjecture}

\subjclass[2020]{Primary 31B05; Secondary 28A50, 31B15} 

\maketitle

\section{Introduction}
Let $\mathbb D$ denote the unit disc of the complex plane $\mathbb C$. 

\subsection{Chui's conjecture}
Consider points $z_1, z_2, \ldots, z_n$ on the unit circle $\Tbb=\partial \mathbb D$. The Chui conjecture, formulated in \cite{Chu71}, suggests that the quantity
	\begin{equation}\label{eq:chui}
		\int_{\Dbb}\Big| \sum_{k=1}^n \frac{1}{z-z_k} \Big|\, |dz|,
	\end{equation}
	where $|dz|$ denotes the Lebesgue measure on the complex plane, attains its minimum if the points $z_1, z_2, \ldots, z_n$ are uniformly distributed on the unit circle, i.e., if $z_k=e^{2\pi i k/n}$. This conjecture has the following natural physical interpretation: in order to minimize the average strength of the electrostatic field generated by $n$ unit charges on the circle, the charges should be distributed uniformly.
	
 Shortly after formulation of the conjecture, D.J.~Newman~\cite{New72} proved that this average strength of an electrostatic field is bounded from below, i.e., there exists an absolute constant $c > 0$ such that for every $n$ and any points $z_1, z_2, \ldots, z_n$ on the unit circle $\Tbb$ the following inequality holds:
	\begin{equation}\label{eq:newbound}
		\int_{\Dbb}\Big| \sum_{k=1}^n \frac{1}{z-z_k} \Big|\, |dz| \ge c.
	\end{equation}
	To be more precise, the above inequality holds with $c=\pi/18$. 

	
	A more recent progress was obtained in the paper \cite{ABF21} where the Chui conjecture was proved if one replaces the $L^1$ norm in the quantity~\eqref{eq:chui} with the norm in certain Bergman spaces.
See \cite{Co26} for two different proofs of such Chui conjecture and further results in this direction.
	
	Notice also that Chui's conjecture is closely related to the topic in Approximation Theory called the approximation by simplest fractions, see for instance papers \cite{Bor16}, \cite{BS23}, \cite{CZ23} and \cite{Kom23}.

\subsection{Weighted Chui's problem on the sphere}
In the present paper we study the following natural question: 
what happens if we place different charges (of the same sign) on the unit sphere of $\rd$, $d\ge 2$?
From the physical point of view, the case $d=3$ is the most interesting. 

Let $\bd_r$ be the open ball of radius $r$ in $\rd$ and let $\bd = \bd_1$ and $\spd = \partial\bd$. Given positive
weights $(\alpha_1,\dots, \alpha_n)=\alpha$ and points $(x_1,\dots, x_n)=X \in (\spd)^n$, define
\begin{equation}\label{e_1}
F_{\alpha,X}(x) =
\sum_{k=1}^n
\alpha_k
\frac{x_k-x}{
|x_k - x|^d}, \quad x \in \bd.
\end{equation}
Write
\begin{equation}\label{e_3}
A =
\sum_{k=1}^n
\alpha_k,  \quad
Q_d(\alpha) = A^{-1/(d-1)}
\sum_{k=1}^n
\alpha_k^{d/(d-1)}.
\end{equation}


The weighted lower bound
\begin{equation}\label{e_4}
\int_{\bd}
|F_{\alpha,X} (x)|\, dx \ge c_d
\frac{\sum_{k=1}^n \alpha_k^{1+2/d}}
{\sum_{k=1}^n \alpha_k^{2/d}}
\end{equation}
is proved in \cite{DTV}.
This result is a higher-dimensional weighted analogue of Newman’s
lower bound for the Chui problem \cite{New72}.
As shown in \cite{DTV}, estimate~\eqref{e_4} is sharp for $d=2$.

Our main result gives a better dependence for $d\ge 3$.
In fact, we have the following sharp result.

\begin{theorem}\label{c_12}
For every integer $d \ge 2$, there exists a dimensional constant $c_d > 0$
such that for every $n \ge 1$ and any $\alpha_1,\dots, \alpha_n > 0$,
\[
\int_{\mathbb{B}^d}
\left|
\sum_{k=1}^n
\alpha_k
\frac{x_k-x}{|x_k - x|^d}
\right|
dx \ge c_d Q_d(\alpha)
\]
for any points $x_1,\dots,x_n \in \mathbb{S}^{d-1}$.
Moreover, for every $d \ge 2$,
there exists a dimensional constant $C_d>0$, such 
that for any $\alpha_1,\dots, \alpha_n > 0$, it holds that  
\[
\int_{\mathbb{B}^d}
\left|
\sum_{k=1}^n \alpha_k
\frac{x_k-x}{|x_k - x|^d}
\right|
dx \leq C_d Q_d(\alpha)
\]
for some collection of points $x_1,\dots,x_n \in \mathbb{S}^{d-1}$.
\end{theorem}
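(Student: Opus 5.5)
Our plan handles the lower and upper bounds separately, following the strategy indicated in the abstract.

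\emph{Lower bound.} Note that $F_{\alpha,X} = c_d\nabla U$, where $U(x)=\sum_k \alpha_k |x-x_k|^{2-d}$ (with the logarithm when $d=2$) is harmonic in $\bd$. We test against a Lipschitz function $\varphi$ on $\rd$ and use the $L^1$ trace (Gauss--Green) identity. Formally, $\operatorname{div}(\varphi F)=\nabla\varphi\cdot F$ in $\bd$, because $\operatorname{div}F=0$ there. The boundary flux of $F$ through $\spd$ concentrates at the charges: near $x_k$, half of the point mass $\omega\,\alpha_k$ (with $\omega=|\spd|$) exits through the sphere. After regularizing by shrinking to $\bd_r$, $r\to1$, we expect an identity of the form
\[
\tfrac{\omega}{2}\sum_k\alpha_k\varphi(x_k) \;=\; \int_{\bd}\nabla\varphi\cdot F\,dx \;+\; \int_{\spd}\varphi\,(F\cdot\nu)_{\mathrm{reg}}\,d\sigma .
\]
The last term is a smooth part, which the paper also calls $\Aaa$. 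It involves the uniform tangential-free distribution $\approx -\tfrac{A}{2}$ times constant density, since for $x\in\spd$ we have $(x_k-x)\cdot x = -|x_k-x|^2/2$. Concretely, $F\cdot\nu=-\tfrac12\sum_k\alpha_k|x_k-x|^{2-d}$ on the sphere, away from the charges.

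Hence we obtain
\[
\Big|\sum_k \alpha_k\varphi(x_k)-\tfrac12\sum_k\alpha_k\int_{\spd}\varphi(x)|x-x_k|^{2-d}d\sigma(x)\cdot c\Big| \le \|\nabla\varphi\|_\infty \int_{\bd}|F| ,
\]
with careful constants. We then choose the tent function $\varphi(x)=\max_k\big(r_k-|x-x_k|\big)_+$, with $r_k=c\,(\alpha_k/A)^{1/(d-1)}$, which is 1-Lipschitz. Then $\sum_k\alpha_k\varphi(x_k)\ge\sum_k\alpha_k r_k\asymp Q_d(\alpha)$. The subtracted term is bounded by $\sum_k\alpha_k\|\varphi * |\cdot|^{2-d}\|$. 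The key estimate needed is
\[
\sup_y\int_{\spd}\varphi(x)|x-y|^{2-d}d\sigma(x)\lesssim \sum_j r_j^{d}\cdot(\text{kernel})
\]
which must be made $\le \tfrac12\sum_k\alpha_k r_k/A$ uniformly. This is the main obstacle. Near a cap of radius $r_j$, the kernel integral of $\varphi$ is at most $r_j\cdot r_j$ locally, and the far contributions sum to at most $\sum_j r_j^{d}\,|x-y|^{2-d}$. Since $\sum_j r_j^{d-1}\lesssim c^{d-1}$, taking $c$ small (dimensional) makes this bounded by a small multiple of $\max r_j$-weighted averages. We expect to need the alternative weighting, in which the tent heights are reduced where caps are crowded; if that fails, we will instead average over rotations of the sphere to replace the sup by the mean, which equals $\omega^{-1}\int\varphi\cdot\int|x-e|^{2-d}\lesssim\sum_j r_j^{d}$. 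The averaged version suffices because the left side is rotation invariant in the choice of points. Indeed, applying the identity to $\varphi\circ R$ and averaging over $R\in SO(d)$ gives $\sum_k\alpha_k r_k \lesssim \int|F| + A\sum_j r_j^d$. Finally, $A\sum_j r_j^d = c^{d}A\sum_j(\alpha_j/A)^{d/(d-1)} = c^d Q_d$, which is absorbed for small $c$. For this averaging step one needs $\varphi\circ R(x_k)$ to be large on average, which is false, so instead we center the tent at the points themselves: take $\varphi$ built at $x_k$, and bound only the smooth term by its sup using the cap estimate above.

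\emph{Upper bound.} We partition $\spd$ into cells $E_k$ with $\sigma(E_k)=\omega\alpha_k/A$ and $\operatorname{diam}E_k\lesssim(\alpha_k/A)^{1/(d-1)}=:\rho_k$ (for example, by a recursive equal-area bisection along a coordinate, in the style of Leopardi and Feige--Schechtman), and we choose $x_k\in E_k$. We then compare $F$ with the field of the uniform charge $\tfrac{A}{\omega}\sigma$. By symmetry the latter is $-c\nabla$ of a constant on the ball (Newton's theorem), so it vanishes. Therefore
\[
F(x)=\sum_k\int_{E_k}\tfrac{A}{\omega}\big(K(x_k-x)-K(y-x)\big)d\sigma(y).
\]
For $\operatorname{dist}(x,x_k)\ge2\rho_k$ we have $|K(x_k-x)-K(y-x)|\lesssim\rho_k|x-x_k|^{-d}$. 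Integrating over $\bd$ away from $x_k$ gives $\lesssim\alpha_k\rho_k\cdot\rho_k^{0}$ after the integral $\int_{|x-x_k|>\rho_k}|x-x_k|^{-d}\,dx\lesssim\log(1/\rho_k)$ — a log loss. To avoid it, we use the second-order cancellation obtained by taking $x_k$ to be a (projected) center of mass of $E_k$, which gives a kernel of size $\rho_k^2|x-x_k|^{-d-1}$ and integral $\lesssim\rho_k$. In the near zone, each term contributes $\int_{|x-x_k|<2\rho_k}\big(\alpha_k|x-x_k|^{1-d}+\text{the uniform part}\big)\lesssim\alpha_k\rho_k$. Summing over $k$ yields $\sum_k\alpha_k\rho_k=Q_d(\alpha)$. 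Since the centroid cannot lie on the sphere exactly, the remaining first-order term has to be controlled through the curvature of the sphere; its error is $\lesssim\rho_k^2$, and we would verify that it contributes at most $\alpha_k\rho_k$.
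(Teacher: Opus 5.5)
Your upper-bound plan is the paper's: cells of mass $\alpha_k/A$ and diameter $\lesssim\rho_k$, barycenters $m_k$ for the second-order far-field bound $\rho_k^2|x-m_k|^{-d-1}$, then moving $m_k$ to $m_k/|m_k|$ at distance $h\lesssim\rho_k^2$. That last step costs only $\int_{\bd}|K_a-K_b|\,dx\lesssim h\log(1/h)+h\lesssim\rho_k$. For unequal masses the diameter-controlled partition is a genuine lemma (the paper cites Voinov), not a routine equal-area bisection. The gap is in the lower bound.

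Your Green identity in $\bd$ is correct. With the probability measures $d\mu_k(\xi)=\omega_{d-1}^{-1}|\xi-x_k|^{2-d}\,dS(\xi)$ it reads
\[
\int_{\bd}\nabla\varphi\cdot F_{\alpha,X}\,dx=\frac{\omega_{d-1}}{2}\sum_{k}\alpha_k\Bigl(\varphi(x_k)-\int_{\spd}\varphi\,d\mu_k\Bigr).
\]
For $d\ge3$ the subtracted measure $\sum_k\alpha_k\mu_k$ concentrates near the charges. The estimate you need, namely that the subtracted term is at most $\frac12\sum_k\alpha_k\varphi(x_k)$ for the tent $\varphi$, is false for unequal weights.

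Here is a counterexample. Put $\alpha_1=1$ at $p$ and $N$ charges of weight $\delta$ at a point $q$ with $|q-p|=2r_1$. Let $M=N\delta\ge1$, and take $\delta$ so small that $M\delta^{1/(d-1)}\le1$.
\begin{itemize}
\item The main term is $\sum_k\alpha_k\varphi(x_k)=r_1+M\varepsilon(\delta/A)^{1/(d-1)}\le2\varepsilon A^{-1/(d-1)}$.
\item The light charges alone give $M\int\theta_1\,d\mu_q\ge c_dMr_1^2$, since $|\xi-q|\le3r_1$ on $\operatorname{supp}\theta_1$.
\item The ratio is $\gtrsim\varepsilon M^{(d-2)/(d-1)}\to\infty$, so no dimensional $\varepsilon$ works.
\item Sliding $q$ to $p$, where the ratio is $O(r_1)$, gives by continuity configurations in which the two terms cancel exactly, and your inequality says nothing.
\end{itemize}
Rotation averaging (which you discard yourself) and unspecified ``reduced tent heights'' do not repair this. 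Your argument does close in two cases, so it yields the lower bound of Corollary~\ref{c_10}, but not of Theorem~\ref{c_12}:
\begin{itemize}
\item For $d=2$: there $F_{\alpha,X}\cdot\nu\equiv-A/2$ on $\mathbb S^1$, so $\mu_k=\sigma$.
\item For equal weights: rearrangement gives $\int\varphi\,d\mu_k\le C_d\varepsilon\max_j r_j$.
\end{itemize}

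The missing ingredient is the exterior of the ball. The paper proceeds as follows:
\begin{enumerate}
\item It takes the whole-space potential $u$ of $\eta=\sum_k\alpha_k\delta_{x_k}-A\sigma$, so $\nabla u=F_{\alpha,X}$ in $\bd$ by Newton's theorem.
\item It bounds $\int_{1<|x|<2}|\nabla u|$ by $C_d\int_{\bd}|F_{\alpha,X}|$, using Poincar\'e (since $\int_{\bd}u=0$) and the Kelvin relation $u(x)=|x|^{2-d}u(x/|x|^2)$.
\item It tests $-\Delta u=\omega_{d-1}\eta$ against a compactly supported Lipschitz extension of $\varphi$.
\end{enumerate}
The exterior region contributes $\frac{\omega_{d-1}}{2}\int_{\spd}\varphi\,d\bigl(\sum_k\alpha_k(\delta_{x_k}+\mu_k)-2A\sigma\bigr)$. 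This exactly cancels your single-layer term. The subtracted quantity becomes $A\int\varphi\,d\sigma\le C_dA\sum_k r_k^d=C_d\varepsilon_d^{d-1}\sum_k\alpha_k r_k$, which is absorbed for small $\varepsilon_d$.
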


In particular, we have the following result for the unit weights.

\begin{cor}\label{c_10}
For every $d \ge 2$, there exists a dimensional constant $c_d>0$
such that
\[
\int_{\mathbb{B}^d}
\left|
\sum_{k=1}^n
\frac{x_k-x}{|x_k - x|^d}
\right|
dx \ge c_d
n^{(d-2)/(d-1)}
\]
for any points $x_1,\dots,x_n \in \mathbb{S}^{d-1}$.
Moreover, for every $d \ge 2$, 
there exists a dimensional constant $C_d>0$ such that 
for each \(n \geq 1\) there exist
points $x_1,\dots,x_n \in \mathbb{S}^{d-1}$ with
\[
\int_{\mathbb{B}^d}
\left|
\sum_{k=1}^n
\frac{x_k-x}{|x_k - x|^d}
\right|
dx \leq C_d
n^{(d-2)/(d-1)}.
\]
In particular, the sharp order for $d = 3$ is
$\sqrt{n}$.
\end{cor}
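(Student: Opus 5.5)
This is Corollary~\ref{c_10}, and I will derive it directly from Theorem~\ref{c_12} by specializing to unit weights $\alpha_1=\dots=\alpha_n=1$. The whole plan is to compute $Q_d$ for these weights and substitute it into both halves of the theorem.

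First I compute $Q_d(\alpha)$ from \eqref{e_3}. Here $A=n$ and $\sum_k \alpha_k^{d/(d-1)}=n$, so
\[
Q_d(1,\dots,1)=n^{-1/(d-1)}\cdot n = n^{(d-2)/(d-1)}.
\]

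For the lower bound, I apply the first part of Theorem~\ref{c_12} with these weights. For any $x_1,\dots,x_n\in\spd$ it gives the integral $\ge c_d\, n^{(d-2)/(d-1)}$, with the same dimensional constant $c_d$.

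For the upper bound, I apply the second part of Theorem~\ref{c_12}. For each $n\ge 1$ it provides points $x_1,\dots,x_n$ for which the integral is $\le C_d\, n^{(d-2)/(d-1)}$.

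Finally, for $d=3$ the exponent is $(3-2)/(3-1)=1/2$, so the sharp order is $\sqrt n$. As a consistency check, for $d=2$ the exponent is $0$, which recovers Newman's bound \eqref{eq:newbound} together with its sharpness.

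There is no real obstacle here: the only step is the arithmetic for $Q_d$, and all of the substance lies in Theorem~\ref{c_12}, which we assume.
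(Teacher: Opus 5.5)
Your proposal is correct and is exactly the paper's route: the corollary is stated as the unit-weight case of Theorem~\ref{c_12}. The only computation needed is $Q_d(1,\dots,1)=n^{-1/(d-1)}\cdot n=n^{(d-2)/(d-1)}$, which you carry out correctly.
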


The expression $Q_d(\alpha)$ improves the right-hand side of \eqref{e_4}.

\begin{proposition}\label{p_13}
For $d \ge 2$ and positive $\alpha_k$,
\begin{equation}\label{e_6}
        \frac{
        \sum_{k} \alpha_k^{1+2/d}}
        {\sum_k \alpha_k^{2/d}}
        \leq Q_d(\alpha).
\end{equation}
Equality holds identically when $d = 2$. When $d > 2$ and at least two weights
are present, the inequality is strict.
\end{proposition}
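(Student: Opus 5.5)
The plan is to homogenize, normalize the total mass, and then read all three sums in \eqref{e_6} as moments of a single random variable. Both sides of \eqref{e_6} are homogeneous of degree $1$ in $\alpha$. Indeed, the left side has degree $(1+2/d)-2/d=1$, and $Q_d$ has degree $-1/(d-1)+d/(d-1)=1$. So I may assume $A=1$ and write $p_k=\alpha_k$, which gives a probability vector with $0<p_k\le 1$. Let $X$ be the random variable taking the value $p_k$ with probability $p_k$, and put $f(s)=\log \mathbb{E}[X^s]=\log\sum_k p_k^{1+s}$. Then
\[
\sum_k p_k^{1+2/d}=e^{f(2/d)},\qquad \sum_k p_k^{2/d}=e^{f(2/d-1)},\qquad \sum_k p_k^{d/(d-1)}=e^{f(1/(d-1))}.
\]
After taking logarithms, \eqref{e_6} becomes
\[
f(s_3)\le f(s_1)+f(s_2),\qquad s_1=\tfrac2d-1,\quad s_2=\tfrac1{d-1},\quad s_3=\tfrac2d .
\]

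The key observation is that $0<X\le 1$, so $s\mapsto X^s$ is pointwise nonincreasing. Hence $f$ is nonincreasing, and $f(0)=0$. Since $d\ge2$ we have $s_1\le 0$, and therefore $f(s_1)\ge f(0)=0$. Moreover $s_3-s_2=\frac{2(d-1)-d}{d(d-1)}=\frac{d-2}{d(d-1)}\ge 0$, so $f(s_3)\le f(s_2)$. Combining these,
\[
f(s_3)\le f(s_2)\le f(s_2)+f(s_1),
\]
which is \eqref{e_6}.

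For the equality cases, take first $d=2$. Then $s_1=0$ and $s_2=s_3=1$, so $f(s_1)=0$ and $f(s_3)=f(s_2)$, and both inequalities are identities. For $d>2$ with at least two weights, every $p_k<1$, so $X<1$ almost surely. Then $s\mapsto\mathbb{E}[X^s]$ is strictly decreasing, and $s_3>s_2$ gives the strict inequality $f(s_3)<f(s_2)$. (Also $s_1<0$ gives $f(s_1)>0$, which adds a second source of strictness.)

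I expect no real obstacle here. The only step needing care is choosing the right normalization, namely $A=1$ together with the probability weights $p_k$, so that all three sums become moments of the same variable and monotonicity in the exponent settles the inequality.
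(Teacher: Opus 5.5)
Your proof is correct and is essentially the paper's argument: after normalizing by $A$, the paper uses the same two comparisons, $\sum_k \ell_k^{2/d}\ge 1$ (your $f(s_1)\ge f(0)=0$) and the termwise bound $\ell_k^{1+2/d}\le \ell_k^{d/(d-1)}$ (your $f(s_3)\le f(s_2)$), and it reads the equality and strictness cases off these two comparisons just as you do. The random-variable and logarithm packaging is only a notational restatement of these termwise monotonicity facts in the exponent.
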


\begin{proof}
 Put $\ell_k = \alpha_k/A$, so that
$\sum_k \ell_k = 1$. Then the left-hand side of \eqref{e_6} is
\[
A
\frac{\sum_k
\ell_k^{1+2/d}}
{\sum_k
\ell_k^{2/d}
}, 
\]
and 
\[
Q_d(\alpha) = A
\sum_{k}
\ell_k^{d/(d-1)}.
\]
Since $2/d \le 1$, one has
$\sum_k \ell_k^{2/d}\geq 1$.
Moreover, $1 + 2/d \geq d/(d-1)$,
and therefore $\ell_k^{1+2/d}
\leq  \ell_k^{d/(d-1)}$.
This proves the inequality. The equality
statements follow from the same two comparisons.
\end{proof}

\subsection*{Organization of the paper}
In Section~\ref{s_low}, we obtain an $L^1$ Lipschitz trace inequality.
Using this estimate and an explicit Lipschitz tent function on the sphere,
we obtain the lower estimate in Theorem~\ref{c_12}.
Section~\ref{s_upp} is devoted to the proof of the upper estimate.
The corresponding argument uses a partition of the sphere into cells of masses $\alpha_k/A$ and
diameters $O_d \left((\alpha_k/A)^{1/(d-1)} \right)$.

\subsection*{Use of Artificial Intelligence}
Main results of Section~\ref{s_low} were obtained with assistance of ChatGPT~5.6. The authors have independently checked the resulting arguments and take full responsibility for the mathematical content.

\subsection*{Acknowledgements}
The authors would like to thank Anton Tselishchev for insightful discussions and valuable ideas.

\subsection*{Notation}
Everywhere below notation $X\lesssim_a Y$ means that $X\leq CY$ for some absolute constant $C>0$ that might depend only on a parameter $a$. We write $X\asymp_a Y$ once we have $X\lesssim_a Y$ and $Y\lesssim_a X$.

\section{Proof of the lower estimate}\label{s_low}


\subsection{Lipschitz trace inequality}
Let $|E|$ denote the surface measure of $E\subset \spd$.
Let $\sigma$ be normalized surface measure on $\spd$. Put
\[
\omega_{d-1} = |\spd|.
\]
We first assume $d \ge 3$. For $y\in\spd$, put
\begin{equation}\label{e_7}
        \Gamma_y(x) =
        \frac{1}{d-2}
        |x-y|^{2-d},\quad 
        K_y(x) = \nabla_x \Gamma_y (x) =
        \frac{y-x}{|y-x|^d}.
\end{equation}
Then
\begin{equation}\label{e_8}
-\Delta \Gamma_y = \omega_{d-1} \delta_y \quad \mathrm{in}\ \mathcal{D}^\prime(\rd).
\end{equation}
Also, observe that
 \begin{equation}\label{e_9}
\int_{\spd}
K_y(x)\, d\sigma(y) = 0, \quad x\in\bd.
\end{equation}
Indeed, the potential obtained by integrating $\Gamma_y$ against $\sigma$ is radial and
harmonic in the ball, hence constant there.

\medskip

The mechanism to prove lower estimate is the following trace inequality.

\begin{proposition}
\label{p_21} 
Let $\eta$ be a finite signed Borel
measure on $\spd$ with $\eta(\spd) = 0$. Define
\begin{equation}\label{e_10}
u(x) =
\int_{\spd}
\Gamma_y(x)\, d\eta(y).
\end{equation}
Then
\begin{equation}\label{e_11}
\sup_{\mathrm{Lip}(\varphi)\leq 1}
\left|
\int_{\spd}
\varphi\, d\eta
\right|
\leq C_d
\int_{\bd}
|\nabla u(x)|\, dx,
\end{equation}
where the Lipschitz constant $\mathrm{Lip}(\varphi)$ is computed using chordal distance.
\end{proposition}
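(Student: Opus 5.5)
The plan is to test $\eta$ against a Lipschitz function by means of Green's formula in $\bd$. The boundary values of $u$ enter through a jump relation for its normal derivative across $\spd$, and a Kelvin transform converts the exterior normal derivative into interior data.

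\emph{Reductions.} Since $\eta(\spd)=0$, we may replace $\varphi$ by $\varphi-\int\varphi\,d\sigma$. Hence we may assume $\int_{\spd}\varphi\,d\sigma=0$, and then $\|\varphi\|_\infty\le 2$ because $\mathrm{Lip}(\varphi)\le1$. Next we regularize $\eta$. Replace $\eta$ by $\eta_\varepsilon=\int_{SO(d)}(R_*\eta)\,\rho_\varepsilon(R)\,dR$, where $\rho_\varepsilon$ is an approximate identity on the rotation group. Then $u_\varepsilon=\int u\circ R^{-1}\,\rho_\varepsilon(R)\,dR$, and by convexity $\int_{\bd}|\nabla u_\varepsilon|\le\int_{\bd}|\nabla u|$. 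Moreover $\int\varphi\,d\eta_\varepsilon\to\int\varphi\,d\eta$. So it suffices to treat $d\eta=g\,d\sigma$ with $g$ smooth and of mean zero. In that case $u$ is smooth up to $\spd$ from each side and continuous across it, and by \eqref{e_8} the jump relation holds:
\[
\partial_r u^- - \partial_r u^+ = \omega_{d-1}\, g \quad\text{on } \spd .
\]

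\emph{Kelvin step.} For $y\in\spd$ and $x^*=x/|x|^2$ we have $|x^*-y|=|x-y|/|x|$. Hence $\Gamma_y(x^*)=|x|^{d-2}\Gamma_y(x)$, and therefore $u(x^*)=|x|^{d-2}u(x)$. Differentiating in $r=|x|$ at $r=1$ gives $-\partial_r u^+=(d-2)u+\partial_r u^-$. Substituting into the jump relation yields the key identity
\[
\omega_{d-1}\, g = 2\,\partial_r u^- + (d-2)\,u \quad\text{on } \spd .
\]
For $d=2$ the same computation with the logarithmic kernel gives the identity without the $u$-term; the constant that appears there is harmless because $\int g\,d\sigma=0$.

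\emph{Green's formula and the trace bound.} Extend $\varphi$ to $\Phi(x)=\chi(|x|)\,\varphi(x/|x|)$, where $\chi$ is a smooth cutoff with $\chi=1$ near $1$ and $\chi=0$ on $[0,1/2]$. Then $|\nabla\Phi|\lesssim_d 1$ on $\bd$. Since $u$ is harmonic in $\bd$, Green's formula gives $\int_{\spd}\varphi\,\partial_r u^-\,d\sigma = c_d\int_{\bd}\nabla\Phi\cdot\nabla u\,dx$, which is bounded by $C\int_{\bd}|\nabla u|$. For the remaining term, $\int_{\spd}\varphi\,u\,d\sigma=\int_{\spd}\varphi\,(u-c)\,d\sigma$ for every constant $c$, since $\varphi$ has mean zero. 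We bound this by $2\int_{\spd}|u-c|\,d\sigma$. Choosing $c$ as the mean of $u$ over the shell $\{1/2<|x|<1\}$, the $W^{1,1}$ trace inequality on that shell combined with Poincaré's inequality gives $\int_{\spd}|u-c|\,d\sigma\lesssim_d\int_{\bd}|\nabla u|$. Combining these bounds with the key identity proves \eqref{e_11}.

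I expect the main obstacle to be the Kelvin-transform identity that expresses $\eta$ through interior quantities only, together with the bookkeeping of signs and normalizations. The regularization step is routine, and the choice of a mean-zero $\varphi$ is what makes the constant in the trace/Poincaré step harmless.
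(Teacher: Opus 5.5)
Your proof is correct, but it takes a genuinely different route from the paper. Both arguments rest on Kelvin symmetry and a Poincar\'e inequality; the difference is where the Kelvin symmetry is used.

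The paper never regularizes $\eta$ and never uses boundary values of $u$. It works with $u\in W^{1,1}_{\mathrm{loc}}(\rd)$ and the distributional identity $-\Delta u=\omega_{d-1}\eta$, so that $\omega_{d-1}\int\varphi\,d\eta=\int_{\rd}\nabla u\cdot\nabla\Phi\,dx$ for a compactly supported Lipschitz extension $\Phi$ obtained via McShane. This produces an exterior term $\int_{1<|x|<2}|\nabla u|\,dx$. The paper moves it into the ball by using $u(x)=|x|^{2-d}u(x^\ast)$ as a change of variables in the bulk. The resulting zeroth-order term $\|u\|_{L^1(\bd)}$ is controlled by Poincar\'e's inequality, using $\int_{\bd}u=0$, which comes from $u(0)=0$ and harmonicity.

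You instead use the Kelvin relation only on $\spd$. Combined with the jump relation, it gives a pointwise Dirichlet-to-Neumann type identity, $g\propto 2\partial_r u^-+(d-2)u$, which expresses the charge density through interior Cauchy data. As a result the exterior region never enters any integral, and the identity itself is explicit and transparent.

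The price is twofold:
\begin{itemize}
\item the rotational smoothing step, which you correctly note does not increase $\int_{\bd}|\nabla u|$ and converges weakly;
\item the classical facts that single layer potentials with smooth density are smooth up to $\spd$ from each side and satisfy the jump relation.
\end{itemize}
The paper's Sobolev framework avoids both. You also dispose of the additive constant differently: you make $\varphi$ mean zero with respect to $\sigma$ and subtract the shell average, rather than invoking $u(0)=0$.

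One minor normalization slip. With $d\eta=g\,d\sigma$ and $-\Delta u=\omega_{d-1}\eta$, the measure $\omega_{d-1}g\,\sigma$ is $g$ times unnormalized surface measure. The jump relation therefore reads $\partial_r u^--\partial_r u^+=g$, not $\omega_{d-1}g$, and your key identity becomes $g=2\partial_r u^-+(d-2)u$. This affects only the constant $C_d$.
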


\begin{proof}
For every compact $K \subset \rd$, Tonelli’s theorem gives
\begin{align*}
\int_K
|u(x)|\,dx  
&\le
\int_{\spd}
\int_K
|\Gamma_y(x)|\, dx d|\eta|(y), \\
\int_K
\left|
\int_{\spd}
K_y(x) d\eta(y)
\right|\,
dx 
&\le
\int_{\spd}
\int_K
|K_y(x)|\, dx d|\eta|(y).
\end{align*}
The inner integrals are bounded uniformly in $y$, because $|z|^{2-d}$ and $|z|^{1-d}$
are locally integrable. Testing against smooth functions and applying Fubini's theorem
therefore show that
\begin{equation}\label{e_12}
u \in W^{1,1}_{\mathrm{loc}} (\rd), 
\quad \nabla u(x) =
\int_{\spd}
K_y(x) d\eta(y), 
\quad -\Delta u = \omega_{d-1} \eta.
\end{equation}
Recall that $\eta(\spd) = 0$, thus $u(0) = 0$. Since $u$ is harmonic in $\bd$, its average on every
concentric ball $\bd_r$, $r < 1$, is zero. Letting $r \nearrow 1$ and using $u\in L^1(\bd)$ gives
\[
\int_{\bd} u(x)\, dx = 0.
\]
Therefore, Poincare’s inequality yields
\begin{equation}\label{e_13} 
\|u\|_{L^1(\bd)} \le C_d \|\nabla u\|_{L^1(\bd)}.
\end{equation}
For $x \neq 0$, put $x^\ast = x/|x|^2$. The identity $|x-y| = |x| |x^\ast -y|$ for $|y| = 1$
gives
\begin{equation}\label{e_14}
u(x) = |x|^{2-d} u(x^\ast),\quad |x| > 1.
\end{equation}
Differentiation provides
\begin{equation}\label{e_15}
        \nabla u(x) = 
        (2-d)|x|^{-d} u(x^\ast) x 
        + 
        |x|^{-d}\left(\nabla u(x^\ast) 
        -
        2\frac{\langle x, \nabla u(x^\ast)\rangle}{|x|^2}x \right).
\end{equation}
Inversion maps the set $1 < |x| < 2$ onto $1/2 < |x^\ast| < 1$. Changing variables in \eqref{e_15},
and then using \eqref{e_13}, gives
\begin{equation}\label{e_16}
        \int_{1<|x|<2}
        |\nabla u(x)|\, dx \le C_d
        \int_{\bd}
        |\nabla u(x)|\, dx.
\end{equation}

Now, let $\varphi$ be $1$-Lipschitz on the sphere. 
Adding a constant, normalize it so that $\|\varphi\|_\infty \le 2$.
By McShane's extension theorem \cite{McS34}, 
$\varphi$ can be extended to an 
$1$-Lipschitz function on $\rd$.
After this extension, we apply a fixed cutoff and obtain
a compactly supported Lipschitz function $\Phi$ with
\begin{equation}\label{e_17}
        \Phi_{|\spd} = \varphi,\quad
        \mathrm{supp\,} \Phi \Subset \bd_{3/2},
        \quad \|\nabla \Phi\|_\infty \le C_d.
\end{equation}

For $\varepsilon>0$ consider a function 
$\varrho\in C^\infty(\mathbb B^d)$ with $\int_{\mathbb B^d} \varrho=1$,
\(\varrho \ge 0\) and $\mathrm{supp}(\varrho)\subset \mathbb B^d$ 
and functions $\varrho_\varepsilon:=\varepsilon^{-d}\varrho(\cdot/\varepsilon)$. 
For standard mollifications $\Phi_\varepsilon$ defined by 
$\Phi_\varepsilon:=\Phi\ast \varrho_\varepsilon$, 
we have $\Phi_\varepsilon \to \Phi$ uniformly, 
$\nabla \Phi_\varepsilon \to \nabla\Phi$ almost everywhere,
and the gradients remain uniformly bounded. 

Green's formula, dominated convergence
against $|\nabla u| \in L^1(\bd_2)$ and \eqref{e_12} therefore give
\begin{align*}
\omega_{d-1}
\left|
\int_{\spd}
\varphi\, d\eta
\right|
=&                   
\left|
\int_{\rd}
\nabla u \cdot \nabla \Phi\,dx
\right|
\\
&\le C_d
\left(
\int_{\bd}
|\nabla u|\, dx +
\int_{1<|x|<2}
|\nabla u|\,dx
\right).
\end{align*}
Estimate \eqref{e_16} completes the proof.
\end{proof}

\begin{rem}
\label{r_22}
 For $d = 2$, take
$\Gamma_y(x) = -\log |x-y|$.
The averaged logarithmic potential
$\int_{\mathbb{S}^1} \Gamma_y(x)\, d\sigma(y)$ is radial and harmonic
in $\mathbb{B}^2$, hence constant there. 
Thus the identity \eqref{e_9} also holds in the
planar case. 
Then $\nabla_x \Gamma_y(x) = (y - x)/|y - x|^2$ and $-\Delta \Gamma_y= 2\pi \delta_y$. The
functions $| \log |z||$ and $|z|^{-1}$ are locally integrable in $\mathbb{R}^2$, so the preceding
$W^{1,1}$ argument applies. If $\eta(\mathbb{S}^1) = 0$, inversion gives
\[
-\log |x -y| = -\log |x| - \log |x^\ast - y|, \quad u(x) = u(x^\ast).
\]
The rest of the proof of Proposition~\ref{p_21} is unchanged, with $\omega_{d-1}$
replaced by $2\pi$.
\end{rem}

\subsection{A direct Lipschitz test for the lower bound}
We now prove the lower bound by inserting one explicit
function into Proposition~\ref{p_21}. 
\begin{proposition}\label{p_31}
For arbitrary $x_1, \dots, x_n \in\spd$ and $\alpha_1,\dots,\alpha_n >0$,
there is an $1$-Lipschitz function $\varphi: \spd \to [0,\infty)$ such that
\begin{equation}\label{e_18}
\int_{\spd}
\varphi\, d
\left(
\sum_{k=1}^n
\alpha_k \delta_{x_k} - A\sigma
\right)
\ge c_d Q_d(\alpha).
\end{equation}
\end{proposition}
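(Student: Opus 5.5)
We take a weighted tent function. Put $\ell_k=\alpha_k/A$ and $r_k=c\,\ell_k^{1/(d-1)}$, with a small dimensional constant $c>0$ to be fixed later, and set
\[
\varphi(x)=\max_{1\le k\le n}\bigl(r_k-|x-x_k|\bigr)_+ ,
\]
with chordal distance. A maximum of $1$-Lipschitz functions is $1$-Lipschitz, and $\varphi\ge 0$, so $\varphi$ is admissible. We then split the left-hand side of \eqref{e_18} into the atomic part $\sum_k\alpha_k\varphi(x_k)$ and the diffuse part $A\int\varphi\,d\sigma$.

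For the atomic part, $\varphi(x_k)\ge r_k$ trivially, so
\[
\sum_k\alpha_k\varphi(x_k)\ge c\sum_k\alpha_k\ell_k^{1/(d-1)}=c\,A^{-1/(d-1)}\sum_k\alpha_k^{d/(d-1)}=c\,Q_d(\alpha).
\]
For the diffuse part, bound the maximum by the sum, $\varphi\le\sum_k(r_k-|x-x_k|)_+$. Each tent is at most $r_k$ and is supported on a spherical cap of chordal radius $r_k$, whose $\sigma$-measure is at most $C_d r_k^{d-1}$. Hence
\[
A\int\varphi\,d\sigma\le A\,C_d\sum_k r_k^{d}=C_d\,c^{d}A\sum_k\ell_k^{d/(d-1)}=C_d c^{d}Q_d(\alpha).
\]
Since the atomic gain scales like $c$ and the diffuse loss like $c^d$ with $d\ge 2$, choosing $c$ small enough that $C_dc^{d-1}\le 1/2$ yields \eqref{e_18} with constant $c/2$.

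The point needing care is the range of $r_k$. Since $\ell_k\le 1$, we get $r_k\le c\le 2$, so the cap bound $\sigma(\{|x-x_k|<r\})\le C_d r^{d-1}$ holds uniformly (for $r\le 2$ it is a standard estimate, and in any case $\sigma\le 1$). No disjointness of the caps is needed, because we used the sum bound on the negative side and only the trivial pointwise bound $\varphi(x_k)\ge r_k$ on the positive side. I expect the main obstacle to have been the choice of radii: $r_k\propto\ell_k^{1/(d-1)}$ is exactly what balances $\alpha_kr_k$ against $A r_k^{d}$. Beyond that choice, the argument is routine.
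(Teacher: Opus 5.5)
Your proof is correct and is essentially the paper's argument: the same tent function $\varphi=\max_k (r_k-|y-x_k|)_+$ with $r_k=\varepsilon_d\ell_k^{1/(d-1)}$, the bound $\varphi(x_k)\ge r_k$, the max-by-sum bound combined with the cap estimate, and the choice $C_d\varepsilon_d^{d-1}\le 1/2$. The only cosmetic difference is that you bound $\int\theta_k\,d\sigma$ by height times cap measure, whereas the paper uses the layer-cake formula; both give $C_d r_k^{d}$.
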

\begin{proof}

A chordal cap satisfies
\begin{equation}\label{e_19}
\sigma\{y : |y-z| < r\} \le C_d r^{d-1},\quad z \in \spd, \quad 0 < r \le 1.
\end{equation}
Choose a number $0 < \varepsilon_d \le 1$ that may depend on the dimension $d$ only, to be fixed below, and set $\ell_k = \alpha_k/A$,
\begin{equation}\label{e_20}
r_k = \varepsilon_d (\alpha_k/A)^{1/(d-1)}, \quad
\theta_k(y) = (r_k- |y- x_k|)_{+}, \quad
\varphi(y) = \max_{1\le k \le n}
\theta_k(y).
\end{equation}
Every $\theta_k$ is $1$-Lipschitz for chordal distance, and the maximum of finitely
many $1$-Lipschitz functions is again $1$-Lipschitz. 
Thus, $\varphi$ is $1$-Lipschitz, moreover,
\begin{equation}\label{e_21}
\varphi(x_k) \ge r_k\quad (1 \le k \le n).
\end{equation}
Integration by level sets
and \eqref{e_19} give
\begin{equation}\label{e_22}
\begin{split}
\int_{\spd}
\theta_k(y)\, d\sigma(y) &=
\int^{r_k}_0
\sigma \{y : |y - x_k| < r_k - t\}\, dt \\
&\le C_d r_k^{d}.
\end{split}
\end{equation}
Since $\varphi \le
\sum_{k} \theta_k$, equations \eqref{e_21} and \eqref{e_22} imply
\begin{equation}\label{e_23}
\begin{split}
\int_{\spd}
\varphi\, d
\left(
\sum^n_{k=1}
\alpha_k \delta_{x_k} - A\sigma
\right)
&\ge
\sum^n_{k=1}
\alpha_k r_k - C_d A
\sum^n_{k=1}
r_k^{d}
 \\
&=
(\varepsilon_d - C_d\varepsilon^{d}_d)
A
\sum^n_{k=1}
\ell_k^{1+1/(d-1)}.
\end{split}
\end{equation}
Fix $\varepsilon_d$ so small that $C_d \varepsilon_d^{d-1} \le 1/2$. The right-hand side of \eqref{e_23} is then at
least
\[
\frac{\varepsilon_d}{2} A
\sum^n_{k=1}
\ell_k^{1+1/(d-1)}
=
\frac{\varepsilon_d}{2}
Q_d(\alpha),
\]
which proves \eqref{e_18}. 
\end{proof}

\begin{proof}[Proof of the lower estimate in Theorem~\ref{c_12}]
To finish the proof, put
\[
\eta =
\sum^n_{k=1}
\alpha_k \delta_{x_k} - A\sigma
\]
and let $u$ be its potential.
Identity \eqref{e_9} guarantees that 
$\nabla u = F_{\alpha, X}$ in $\bd$.
Apply Proposition \ref{p_21}, or Remark~\ref{r_22} when $d = 2$, to the function provided
by Proposition~\ref{p_31}. We obtain
\[
c_d Q_d(\alpha) \le
\left|
\int_{\spd}
\varphi \, d\eta
\right|
\le C_d
\int_{\bd}
|F_{\alpha,X}(x)|\, dx.
\]
This proves the claimed lower estimate for every configuration $X$.
\end{proof}

\section{Optimality of the lower estimate}\label{s_upp}
In this section, we prove the upper estimate in Theorem~\ref{c_12}.
We will use the following partition lemma.

\begin{lemma}[Voinov \cite{Voi26}] \label{l_31}
        Let \(n\) be a natural number. 
        Then for any \(\ell_1,\ldots,\ell_n > 0\) such that 
        \(\sum_1^n \ell_k=1\), there exist sets
        \(V_k\subset \spd\), \(1\leq k\leq n\), such that
        \[
                \bigcup_{k=1}^n V_k=\spd, \ \ \sigma(V_k)=\ell_k,
        \]
        the sets \(V_k\) are pairwise disjoint, and
        \[
                \mathrm{diam}(V_k)\asymp_d \ell_k^{\frac{1}{d-1}}.
        \]
\end{lemma}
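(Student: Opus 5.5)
The plan is to build the cells by a recursive bisection of the sphere, a weighted version of the usual equal-area partitions. Since Lemma~\ref{l_31} is a two-sided statement, I treat the two sides separately. The lower bound $\mathrm{diam}(V_k)\gtrsim_d \ell_k^{1/(d-1)}$ is automatic: any set $V\subset\spd$ of diameter $r$ lies in a chordal cap of radius $r$, so \eqref{e_19} gives $\ell_k=\sigma(V_k)\le C_d\,\mathrm{diam}(V_k)^{d-1}$. The real content is therefore the upper bound $\mathrm{diam}(V_k)\lesssim_d \ell_k^{1/(d-1)}$, together with exact masses, disjointness and covering.

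For the upper bound I would first reduce to partitioning a cube. Fix a bi-Lipschitz, measure-distorting-by-bounded-density decomposition of $\spd$ into finitely many pieces $P_1,\dots,P_m$ ($m$ depending only on $d$), each the image of $[0,1]^{d-1}$ under a map with Jacobian density bounded above and below. Distribute the weights $\ell_k$ among the pieces so that each piece receives exactly its mass. This is not literally possible with whole weights, so a large weight is allowed to be split across at most $m$ pieces. I worry about this step because split cells may be disconnected, but a disconnected union of $O_d(1)$ pieces, each of diameter $\lesssim \ell_k^{1/(d-1)}$, is fine only if the pieces are adjacent.

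To avoid that complication, my first attempt would instead be a direct recursive bisection on the sphere viewed via a measure-preserving map to the cube $[0,1]^{d-1}$ with Lebesgue measure. Such a map is bi-Hölder but not bi-Lipschitz globally, so I would rather use the piecewise Lipschitz charts and run the construction of the next paragraph on a measure with density in $[c,C]$.

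Core construction (cube with density $\mu$ comparable to Lebesgue): I would sort the weights decreasingly and proceed by induction on a box $R$ of aspect ratio at most $2$ with mass $\mu(R)=\sum_{k\in I}\ell_k$. If $|I|=1$, take $V=R$; its diameter is $\asymp \mu(R)^{1/(d-1)}$. Otherwise, cut $R$ by a hyperplane orthogonal to its longest side, splitting $I$ into $I_1,I_2$ with masses $m_1,m_2$, and place the cut so that the two sub-boxes have masses exactly $m_1,m_2$ (possible by continuity of $\mu$). The difficulty is keeping aspect ratios bounded. That requires $m_1/m_2\in[1/3,3]$ roughly, which a greedy split of a sorted list achieves unless a single weight exceeds, say, $3/4$ of $\mu(R)$.

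In that exceptional case I would give the big weight $\ell_j$ the region $R$ minus a thin slab, a slab carrying mass $\mu(R)-\ell_j$ that is then recursively subdivided. The big cell still has diameter $\lesssim \mathrm{diam}(R)\asymp \ell_j^{1/(d-1)}$, because $\ell_j\ge \tfrac34\mu(R)$. The slab, however, is a bad-aspect region, so I would instead cut the remaining small weights into subgroups whose masses equal those of near-cubical boxes tiling a boundary layer. This step is the main obstacle. I would handle it by stacking: treat the slab as a union of $O(1)$-aspect boxes of side $s$ with $s^{d-1}$ comparable to the group masses, while the big cell absorbs all leftover measure. This needs care when the small weights are tiny and numerous, but then a greedy packing by aggregated groups of mass $\approx s^{d-1}$ works, and each group recurses on a genuine box.
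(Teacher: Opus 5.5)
The paper does not prove Lemma~\ref{l_31}; it quotes it from Voinov's work, so your proposal has to stand on its own. Your lower bound $\ell_k\le C_d\,\mathrm{diam}(V_k)^{d-1}$ via \eqref{e_19} is correct, and recursive mass-bisection of boxes of bounded eccentricity is a workable strategy. However, two steps do not work as written.

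\textbf{First gap: reducing the sphere to one box.} Your core construction runs on a single box, but you never reduce $\spd$ to one. With charts $P_1,\dots,P_m$, whichever weight is being placed when the mass of $P_1$ runs out gets split, however small it is. The recursions inside $P_1$ and $P_2$ then place its two parts independently, so that cell can have diameter $\asymp 1$. You flag this and then fall back on the charts anyway.

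The fix is a single parametrization by one box that is Lipschitz, injective off a null set, and has Jacobian in $[c_d,C_d]$. Fold $R_0=[0,2]\times[0,1]^{d-2}$ onto two copies $Q^\pm$ of $Q=[0,1]^{d-1}$ glued along $\partial Q$: send $(t,s)\mapsto(t,s)\in Q^+$ for $t\le1$ and $(t,s)\mapsto(2-t,s)\in Q^-$ for $t\ge1$. Then map $Q^\pm$ onto the closed upper and lower hemispheres by a radial bi-Lipschitz map onto the unit ball, followed by $v\mapsto(\sin(\tfrac{\pi}{2}|v|)\,v/|v|,\pm\cos(\tfrac{\pi}{2}|v|))$; the two maps agree on $\partial Q$ (the equator). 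The composite $F\colon R_0\to\spd$ is Lipschitz, onto, and injective on the interior of $R_0$. Hence the pull-back $\mu$ of $\sigma$ has density comparable to Lebesgue measure. Images of disjoint cells meet only in the equator, a null set you can reassign. Diameters grow by at most $\mathrm{Lip}(F)$.

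\textbf{Second gap: the dominant-weight case.} Suppose the residual mass $\varepsilon=\mu(R)-\ell_j$ is carried by a single weight. A slab of mass $\varepsilon$ has diameter $\asymp\mathrm{diam}\,R$, which is much larger than $\varepsilon^{1/(d-1)}$. Near-cubes tiling the slab have side equal to its thickness $\asymp\varepsilon/\mathrm{diam}(R)^{d-2}$, so for $d\ge3$ their mass is far smaller than $\varepsilon$. No grouping of the residual weights can match them, and the ``greedy packing'' is never specified.

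No slab is needed. Halve $R$ by mass along longest sides until you reach a box $R''$ with $\varepsilon\le\mu(R'')<2\varepsilon$. Cut $R''$ once more at mass fraction $\varepsilon/\mu(R'')\in(1/2,1]$ to get a box $R'$ of bounded eccentricity with $\mu(R')=\varepsilon$. Recurse on $R'$ with the weights $I\setminus\{j\}$, and set $V_j=R\setminus R'$. This cell is never subdivided, so its shape is irrelevant, and $\mathrm{diam}\,V_j\le\mathrm{diam}\,R\lesssim_d\mu(R)^{1/(d-1)}\lesssim_d\ell_j^{1/(d-1)}$.

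\textbf{A smaller point.} Cuts at mass fraction in $[1/4,3/4]$ do not preserve ``aspect ratio at most $2$'', since a cut at length fraction $a$ can create eccentricity $1/a$. Use instead the invariant $\rho_d=4C_d/c_d$, where $[c_d,C_d]$ bounds the density of $\mu$. A mass fraction $\theta$ forces length fraction at least $\theta c_d/C_d$, so the greedy split, the halvings and the final cut all preserve this invariant.
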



We shall need the following 
estimate.

\begin{lemma}\label{l_32}
Let \(V\subset \mathbb S^{d-1}\subset \mathbb R^d\) be a 
measurable set with
\(0<\sigma(V)<\infty\). Put
\[
        d\mu(y)=\frac{1}{\sigma(V)}\,d\sigma(y)
        \qquad \text{ and } \qquad
        m=\int_V y\,d\mu(y).
\]
Let \(x\in\mathbb R^d\) be such that \(\delta:=\mathrm{dist}(x,\mathrm{conv} V)>0.\)
Then
\[
        \left|
        \int_V \frac{y - x}{|y - x|^d}\,d\mu(y)
        -
        \frac{m - x}{|m - x|^d}
        \right|
        \leq
        C_d\,\frac{1-|m|^2}{\delta^{d+1}}.
\]
\end{lemma}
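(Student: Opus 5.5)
Write $K(z) = z/|z|^d$ for $z\ne 0$, so that the integrand is $K(y-x)$ and the comparison term is $K(m-x)$. The plan is a second-order Taylor expansion of $K$ around the barycenter $m$. The linear term vanishes because $m$ is the mean of $\mu$. The quadratic term is controlled by the variance of $\mu$, which on the sphere is exactly $1-|m|^2$.

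\emph{Step 1 (the segment stays away from $x$).} Since $\mu$ is a probability measure on $V$, the point $m=\int y\,d\mu$ lies in $\overline{\mathrm{conv}}\, V$. For every $y\in V$ the whole segment $[m,y]$ therefore also lies in $\overline{\mathrm{conv}}\, V$. Distance to a set equals distance to its closure, so every point $z$ of this segment satisfies $|z-x|\ge \delta$.

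\emph{Step 2 (Taylor's formula with integral remainder).} $K$ is smooth away from $0$ and homogeneous of degree $1-d$. Hence $|D^2K(w)|\le C_d|w|^{-d-1}$ in operator norm. For $y\in V$ apply Taylor's formula along $t\mapsto m+t(y-m)-x$:
\[
K(y-x)=K(m-x)+DK(m-x)[y-m]+R(y),\qquad |R(y)|\le \tfrac12 \sup_{z\in[m,y]}|D^2K(z-x)|\,|y-m|^2 .
\]
By Step 1 this gives $|R(y)|\le C_d\,\delta^{-d-1}|y-m|^2$. Integrate against $\mu$. The linear term $DK(m-x)\bigl[\int (y-m)\,d\mu\bigr]$ vanishes by the definition of $m$. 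This leaves
\[
\Bigl|\int_V K(y-x)\,d\mu(y)-K(m-x)\Bigr|\le C_d\,\delta^{-d-1}\int_V|y-m|^2\,d\mu(y).
\]

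\emph{Step 3 (variance identity).} Since $|y|=1$ on $V\subset\spd$,
\[
\int_V |y-m|^2\,d\mu = \int_V |y|^2\,d\mu - 2\langle m,m\rangle + |m|^2 = 1-|m|^2 .
\]
Substituting this into Step 2 gives the claimed bound.

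The only delicate point is Step 1, namely ensuring that the Taylor remainder never sees the singularity of $K$ at $x$. This is exactly why the hypothesis is stated with the convex hull rather than with $V$ itself: the segments $[m,y]$ may leave $V$ but never leave $\overline{\mathrm{conv}}\, V$. Everything else is routine. The Hessian bound follows from homogeneity, and the argument works uniformly for $d\ge 2$.
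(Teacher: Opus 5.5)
Your proposal is correct and follows the paper's proof essentially step for step. Both arguments make a second-order Taylor expansion of the kernel around the barycenter $m$ along segments lying in the convex hull, use the cancellation of the linear term, the Hessian bound $C_d|z-x|^{-d-1}$ and the variance identity $\int_V|y-m|^2\,d\mu = 1-|m|^2$; your passage to $\overline{\mathrm{conv}}\,V$ in Step 1 is a slightly more careful version of the paper's claim that $m\in\mathrm{conv}\,V$.
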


Here and elsewhere in the text, \(\mathrm{conv}\) stands 
for the convex hull in \(\mathbb R^d\).

\begin{proof}
        We consider the vector-valued function
        \[
                K_z(x)=\frac{z-x}{|z-x|^d},
        \]
        where \(z\in\mathbb R^d\setminus\{x\}\). Since \(m\in \mathrm{conv} V\), the segment
        \[
                \{m+t(y-m):0\leq t\leq 1\}
        \]
        is contained in \(\mathrm{conv} V\) for every \(y\in V\). Hence this
        segment stays at distance at least \(\delta\) from \(x\).

        By Taylor's formula in \(\mathbb R^d\), applied componentwise to \(K_z(x)\),
        we have
        \[
                K_y(x)
                =
                K_m(x)
                +
                D K_m(x)[y-m]
                + 
                \int_0^1 (1-t)
                D^2 K_{m+t(y-m)}(x)[y-m,y-m]\,dt .
        \]
        Integrating over \(V\) with respect to \(d\mu\), the linear term vanishes,
        because
        \[
                \int_V (y-m)\,d\mu(y)
                =
                \int_V y\,d\mu(y)-m
                =
                0.
        \]
        Therefore,
        \[
                \int_V K_y(x)\,d\mu(y)-K_m(x)
                = 
                \int_V\int_0^1 (1-t)
                D^2K_{m+t(y-m)}(x)[y-m,y-m]\,dt\,d\mu(y).
        \]

        Consequently,
        \begin{equation}\label{e_tay}
        \left|
        \int_V K_y(x)\,d\mu(y)-K_m(x)
        \right|
        \leq
        \frac{1}{2}
        \sup_{z\in \mathrm{conv} V}\|D^2K_z(x)\|
        \int_V |y-m|^2\,d\mu(y).
        \end{equation}
        For the function \(K_z(x)\), a direct differentiation gives
        \[
                \|D^2K_z(x)\|
                \leq
                \frac{C_d}{|z-x|^{d+1}}.
        \]
        Since \(\mathrm{dist}(x,\mathrm{conv} V)=\delta\), it follows that
        \begin{equation}\label{e_d2}
                \sup_{z\in \mathrm{conv} V}\|D^2K_z(x)\|
                \leq
                \frac{C_d}{\delta^{d+1}}.
        \end{equation}
        Finally, since \(|y|=1\) for \(y\in \mathbb S^{d-1}\), we know that
        \(|y-m|^2=1+|m|^2-2\langle y,m\rangle\). 
        We recall the definition of \(m\) and integrate this equality to 
        infer the following properties
        \begin{equation}\label{e_bary}
                \int_V |y-m|^2\,d\mu(y)
                =
                \int_V |y|^2\,d\mu(y)-|m|^2
                =
                1-|m|^2.
        \end{equation}
        Combining the estimates \eqref{e_tay}, \eqref{e_d2} and \eqref{e_bary}, 
        we obtain the required inequality.
\end{proof}

\medskip

Let \(\overline{\mathbb B^d}\) denote the closed unit ball.
We shall also need the following elementary estimate. 

\begin{lemma} \label{l_ab}
        If \(a,b\in \overline{\mathbb B^d} \) and
        \(h:=|a-b|\), then
        \begin{equation}\label{e_ab}
                \int_{\mathbb B^d}
                \left|
                K_a(x)
                -
                K_b(x)
                \right|\,dx
                \leq C_d \sqrt{h}.
        \end{equation}
\end{lemma}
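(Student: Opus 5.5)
We will prove \eqref{e_ab} by splitting the ball into a neighbourhood of the two poles and its complement. We may assume $h\le 1$; for $h>1$ the bound follows from $\int_{\bd}|K_a|\,dx\le \int_{\bd_2(a)}|x-a|^{1-d}\,dx\le C_d$ and the analogous bound for $K_b$, since $\sqrt h\ge 1$ then. Put $\rho=\sqrt h\ (\ge h)$ and let
\[
N=\{x\in\bd : \min(|x-a|,|x-b|)<2\rho\}.
\]

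\emph{Near region.} On $N$ we use the triangle inequality $|K_a-K_b|\le |x-a|^{1-d}+|x-b|^{1-d}$. Since $|a-b|=h\le\rho$, the set $N$ is contained in $\bd_{3\rho}(a)\cap \bd_{3\rho}(b)$. Integrating in polar coordinates about $a$ (resp.\ $b$), each term contributes at most $\int_{|z|<3\rho}|z|^{1-d}\,dz=3\omega_{d-1}\rho$. Hence
\[
\int_N|K_a-K_b|\,dx\le C_d\rho=C_d\sqrt h .
\]

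\emph{Far region.} For $x\in\bd\setminus N$ we have $|x-a|\ge2\rho\ge 2h$. Hence every point $z$ of the segment $[a,b]$ satisfies $|x-z|\ge |x-a|-h\ge |x-a|/2$. By the mean value theorem and the bound $\|D_zK_z(x)\|\le C_d|z-x|^{-d}$ (obtained by direct differentiation, as in the proof of Lemma~\ref{l_32}),
\[
|K_a(x)-K_b(x)|\le C_d\,h\,|x-a|^{-d}.
\]
It remains to integrate $|x-a|^{-d}$ over $\{x\in\bd: |x-a|\ge 2\rho\}$, and this is the main point. A crude polar integral over the full ball gives $h\log(1/\rho)\lesssim \sqrt h$, which already suffices. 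More precisely, since $\bd\subset\bd_2(a)$,
\[
\int_{2\rho\le|x-a|\le 2}|x-a|^{-d}\,dx=\omega_{d-1}\log(1/\rho)=\tfrac12\omega_{d-1}\log(1/h).
\]
The far contribution is therefore at most $C_d h\log(1/h)\le C_d\sqrt h$ for $0<h\le1$.

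Adding the two regions proves \eqref{e_ab}. The only delicate step is choosing the splitting radius: it must be at least comparable to $h$, so that the segment $[a,b]$ stays away from $x$ in the far region, and small enough that the near region costs at most $\sqrt h$. The choice $\rho=\sqrt h$ works, and any $\rho$ between $Ch$ and $\sqrt h$ would do as well. In fact the argument gives the stronger bound $h\log(e/h)$, but $\sqrt h$ is all that is needed.
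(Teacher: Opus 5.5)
Your proof is correct and is essentially the paper's argument. It uses the same split around $a$ and $b$: the triangle inequality with $\int_{|z|<r}|z|^{1-d}\,dz\lesssim_d r$ near the poles, and the mean value bound $|K_a(x)-K_b(x)|\lesssim_d h|x-a|^{-d}$ followed by a logarithmic radial integral away from them. The only difference is cosmetic: the paper splits at radius $2h$, so its near part is $\lesssim_d h$ and its far part $\lesssim_d h\log(1/h)$, whereas your radius $2\sqrt h$ makes the near part cost $\sqrt h$ directly (and, as you note, taking $\rho=h$ recovers the paper's version).
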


\begin{proof}
        Suppose first that \(h \geq \frac{1}{2}\).
        In this case, the direct estimate gives 
        \[ 
        \begin{split}
                &\int_{\mathbb B^d}
                \left|
                K_a(x)
                -
                K_b(x)
                \right|\,dx \\
                &\qquad\leq
                \int_{\mathbb B^d} |K_a(x)|\,dx +
                \int_{\mathbb B^d} |K_b(x)|\,dx 
                \le C_d,
        \end{split}
        \]
        which gives required inequality, since 
        \(\sqrt{h} \geq 1/\sqrt{2}\).

        \smallskip

        Now, suppose that \(h \leq \frac{1}{2}\).
        On the set \(B(a,2h)\cup B(b,2h)\), we estimate both fractions directly:
        \[
        \begin{split}
                &\int_{B(a,2h)\cup B(b,2h)}
                \left|
                K_a(x)
                -
                K_b(x)
                \right|\,dx \\
                &\qquad\leq
                \int_{B(a,2h)\cup B(b,2h)}
                \bigl(|K_a(x)|+|K_b(x)|\bigr)\,dx \\
                &\qquad\leq 2 C_d \int_0 ^ {2h} dt
                \leq C_d h.
        \end{split}
        \]
        Next, notice that outside the above set, the segment \(T\) joining 
        \(a\) to \(b\) stays at distance comparable to \(|x-a|\) from \(x\). 
        In other words, for \(x\in\mathbb B^d \setminus\bigl(B(a,2h)\cup B(b,2h)\bigr)\) 
        and \(\gamma_s\in T\) with \(\gamma_s=(1-s)a+sb\) and \(s\in[0,1]\), 
        we have \(|\gamma_s-a|\leq h\) and
        \(|x-a|\geq 2h\), which means that \(|x-\gamma_s|\geq \frac12|x-a|\). 
        Hence, by the Lagrange mean value inequality applied to the function 
        \(y \mapsto K_y(x)\), we have
        \[
                |K_a(x)-K_b(x)|
                \leq |a-b|\sup_{0\leq s\leq1}\|DK_{\gamma_s}(x)\|
                \leq C_d\frac{h}{|x-a|^d}.
        \]
        Thus
        \[
                \int_{\mathbb B^d\setminus(B(a,2h)\cup B(b,2h))}
                |K_a(x)-K_b(x)|\,dx
                \leq C_d h\int_{2h}^{2}\frac{dt}{t}
                \leq C_d h\log\frac1h
                \leq C_d \sqrt{h}
        \]
        and the lemma follows.
\end{proof}

\begin{proof}[Proof of the upper estimate in Theorem~\ref{c_12}]
        
        Put \(\ell_k := \alpha_k/A\). Since \(\ell_k > 0\) and 
        \(\sum_{k = 1} ^ n \ell_k = 1\),
        we may apply Lemma~\ref{l_31} and consider 
        a partition into the sets \(V_k\). 
        Let \(m_k\) denote the Euclidean barycenter of \(V_k\), that is,
        \[
                m_k
                :=
                \frac{1}{\sigma(V_k)}
                \int_{V_k} y\,d\sigma(y)
                =
                \frac{1}{\ell_k}\int_{V_k} y\,d\sigma(y).
        \]
        
        \smallskip

        First, we estimate the quantity
        \[
                J
                :=
                \int_{\mathbb B^d}
                \left|
                \sum_{k=1}^n
                \alpha_k K_{m_k}(x)
                \right|\,dx.
        \]
        Since
        \[
                \int_{\mathbb S^{d-1}}
                K_y(x)\,d\sigma(y)=0
        \]
        for
        \(x\in\mathbb B^d\),
        we may write
        \[
        \begin{split}
                J
                &=
                \int_{\mathbb B^d}
                \left|
                \sum_{k=1}^n
                \left[
                \alpha_k K_{m_k}(x)
                -
                A\int_{V_k}K_y(x)\,d\sigma(y)
                \right]
                \right|\,dx                                      \\
                &\leq
                A\sum_{k=1}^n
                \ell_k\int_{\mathbb B^d}
                \left|
                K_{m_k}(x)
                -
                \frac{1}{\ell_k}\int_{V_k}K_y(x)\,d\sigma(y)
                \right|\,dx,
        \end{split}
        \]
        Thus, it remains to estimate each summand.

        Let $B(x, r)$ denote the open ball of radius $r$ and centered at $x\in \rd$.
        Fix $k$. Let $\Aaa>1$ be a sufficiently large constant. We split
        the corresponding integrals into a near and a far part:
        \[
        \begin{split}
                I_k
                &:=
                \int_{\mathbb B^d \cap B(m_k,\Aaa\ell_k^{1/(d-1)})}
                \left|
                K_{m_k}(x)
                -
                \frac{1}{\ell_k}\int_{V_k}K_y(x)\,d\sigma(y)
                \right|\,dx,                                    \\
                II_k
                &:=
                \int_{\mathbb B^d\setminus B(m_k,\Aaa \ell_k^{1/(d-1)})}
                \left|
                K_{m_k}(x)
                -
                \frac{1}{\ell_k}\int_{V_k}K_y(x)\,d\sigma(y)
                \right|\,dx.
        \end{split}
        \]

        We estimate the term \(I_k\). By the triangle inequality,
        \[
                I_k
                \leq
                \int_{\mathbb B^d\cap B(m_k,\Aaa\ell_k^{1/(d-1)})}
                \frac{dx}{|m_k-x|^{d-1}}
                +
                \frac{1}{\ell_k}\int_{\mathbb B^d\cap B(m_k,\Aaa\ell_k^{1/(d-1)})}
                \int_{V_k}
                \frac{d\sigma(y)\,dx}{|y-x|^{d-1}},
        \]
        and thus the first term is bounded from above by
        \[
                C_d\int_0^{\Aaa\ell_k^{1/(d-1)}}\frac{t^{d-1}}{t^{d-1}}\,dt
                \leq C_{d,\Aaa}\ell_k^{1/(d-1)} .
        \]
        Moreover, since \(\mathrm{diam}(V_k) \le C_d\ell_k^{1/(d-1)}\) and
        \(m_k\in \mathrm{conv} V_k\), for every \(y\in V_k\) we have
        \[
                B(m_k,\Aaa\ell_k^{1/(d-1)})\subset B(y,C_\Aaa\ell_k^{1/(d-1)}).
        \]
        Therefore
        \[
        \begin{split}
                \frac{1}{\ell_k}\int_{\mathbb B^d \cap B(m_k,\Aaa\ell_k^{1/(d-1)})}
                \int_{V_k}
                \frac{d\sigma(y)\,dx}{|y-x|^{d-1}}
                &\leq
                \frac{1}{\ell_k}\int_{V_k}
                \int_{B(y,C_\Aaa\ell_k^{1/(d-1)})}
                \frac{dx}{|y-x|^{d-1}}\,d\sigma(y)                 \\
                &\leq
                C_{d,\Aaa} \frac{1}{\ell_k}\int_{V_k}\ell_k^{1/(d-1)}\,d\sigma(y)                   \\
                &=
                C_{d,\Aaa} \frac{1}{\ell_k}\sigma(V_k)\ell_k^{1/(d-1)}                              \\
                &=
                C_{d,\Aaa}\ell_k^{1/(d-1)}.
        \end{split}
        \]
        Hence
        \begin{equation}\label{e_Ik}
                I_k\leq C_{d,\Aaa}\ell_k^{1/(d-1)}.
        \end{equation}

        We now estimate \(II_k\). For fixed \(x\), apply the barycenter
        estimate from Lemma~\ref{l_32} to the set \(V=V_k\) and to the point \(m=m_k\). 
        This gives
        \[
                \left|
                K_{m_k}(x)
                -
                \frac{1}{\ell_k}\int_{V_k}K_y(x)\,d\sigma(y)
                \right|
                \leq
                C_d\,
                \frac{1-|m_k|^2}
                {\mathrm{dist}(x,\mathrm{conv} V_k)^{d+1}}.
        \]
        Since \(\mathrm{diam}(V_k) \leq  C_d\ell_k^{1/(d-1)}\), we have
        \[
                1-|m_k|^2
                =
                \frac{1}{\ell_k}\int_{V_k}|y-m_k|^2\,d\sigma(y)
                \leq
                C\ell_k^{2/(d-1)} .
        \]
        Furthermore, choosing $\Aaa$ sufficiently large, we have for
        $x\notin B(m_k,\Aaa\ell_k^{1/(d-1)})$,
        \[
                \mathrm{dist}(x, \mathrm{conv} V_k)
                \geq
                c\,|x-m_k|.
        \]
        Indeed, since \(m_k\in \mathrm{conv} V_k\) and
        \(\mathrm{diam}(\mathrm{conv} V_k) \leq C_d\ell_k^{1/(d-1)}\), every
        \(\xi\in \mathrm{conv} V_k\) satisfies
        \[
                |\xi-m_k|\leq C\ell_k^{1/(d-1)}.
        \]
        Thus, for $x\notin B(m_k,\Aaa\ell_k^{1/(d-1)})$,
        \[
                |x-\xi|
                \geq
                |x-m_k|-|\xi-m_k|
                \geq
                \left(1-\frac{C}{\Aaa}\right)|x-m_k|
                \geq
                c|x-m_k|,
        \]
        provided $\Aaa$ is large enough.

        Consequently, for \(x\in\mathbb B^d\setminus B(m_k,\Aaa\ell_k^{1/(d-1)})\),
        \[
                \left|
                K_{m_k}(x)
                -
                \frac{1}{\ell_k}\int_{V_k}K_y(x)\,d\sigma(y)
                \right|
                \leq
                C_d\frac{\ell_k^{2/(d-1)}}{|x-m_k|^{d+1}}.
        \]
        Using polar coordinates, we have
        \begin{equation}\label{e_IIk}
                \begin{split}
                II_k
                &\leq
                C_d\ell_k^{2/(d-1)}
                \int_{\mathbb B^d \setminus B(m_k,\Aaa\ell_k^{1/(d-1)})}
                \frac{dx}{|x-m_k|^{d+1}}                            \\
                &\leq
                C_d\ell_k^{2/(d-1)}
                \int_{\Aaa\ell_k^{1/(d-1)}}^{2}
                \frac{t^{d-1}}{t^{d+1}}\,dt                         \\
                &=
                C_d\ell_k^{2/(d-1)}
                \int_{\Aaa\ell_k^{1/(d-1)}}^{2}\frac{dt}{t^2}       \\
                &\leq
                C_{d,\Aaa}\ell_k^{1/(d-1)}.
                \end{split}
        \end{equation}

        Combining the estimates \eqref{e_Ik} and \eqref{e_IIk} 
        for \(I_k\) and \(II_k\), respectively, we obtain
        \[
                I_k+II_k\leq C_d\ell_k^{1/(d-1)}.
        \]
        Summing over \(k=1,\dots,n\), we get
        \begin{equation}\label{e_J}
                J
                \leq
                C_d A\sum_{k=1}^n \ell_k^{1+1/(d-1)}
                =
                C_d A^{-1/(d-1)} \sum_{k=1}^n\alpha_k^{d/(d-1)}.
        \end{equation}

        \bigskip
        It remains to replace the points \(m_k\), 
        which lie inside the ball, by points on the sphere.
        For \(m_k \neq 0\), define
        \[
                x_k:= \frac{m_k}{|m_k|}.
        \]
        If \(m_k = 0\), let \(x_k\) be an arbitrary
        point on the unit sphere.

        We claim that the replacement of \(m_k\) by \(x_k\) 
        does not change the order of the main estimate.

        First, recall that we already know the estimate 
        \(1-|m_k|^2\leq C\ell_k^{2/(d-1)}\). Therefore,
        \[
                |x_k-m_k|
                =1-|m_k|
                =\frac{1-|m_k|^2}{1+|m_k|}
                \leq C\ell_k^{2/(d-1)}.
        \]

        Applying Lemma~\ref{l_ab} with \(a=x_k\), \(b=m_k\), and using
        \(|x_k-m_k|\leq C\ell_k^{2/(d-1)}\), we get
        \[
        \begin{split}
                \int_{\mathbb B^d}
                \left|
                K_{x_k}(x)
                -
                K_{m_k}(x)
                \right|\,dx
                \leq
                C_d \ell_k^{1/(d-1)}.
        \end{split}
        \]
        Summing over \(k = 1, \dots, n\), we obtain 
        \[ 
                \int_{\mathbb B^d}
                \left|
                \sum_{k=1}^n
                \alpha_k
                \big(
                K_{x_k}(x)
                -
                K_{m_k}(x)
                \big)
                \right|\,dx
                \leq C_d \sum_{k=1}^n \alpha_k \ell_k^{1/(d-1)}.
        \]
        Consequently, 
        \[
                \int_{\mathbb B^d}
                \left|
                \sum_{k=1}^n \alpha_k K_{x_k}(x)
                \right|\,dx
                \leq J + C_d \sum_{k=1}^n\alpha_k \ell_k^{1/(d-1)}
                = J + C_d A^{-1/(d-1)} \sum_{k=1}^n\alpha_k^{d/(d-1)}.   
        \]
        By the already obtained estimate \eqref{e_J} on \(J\), this yields                
        \[
                \int_{\mathbb B^d}
                \left|
                \sum_{k=1}^n
                \alpha_k
                K_{x_k}(x)
                \right|\,dx
                \leq C_d A^{-1/(d-1)} \sum_{k=1}^n \alpha_k ^{d/(d-1)}.
        \]

        This proves the upper estimate in Theorem~\ref{c_12}.
\end{proof}

\bibliographystyle{amsplain}

\end{document}